\theoremstyle{plain}
\newtheorem{theorem}{Theorem}[section]
\newtheorem{conjecture}[theorem]{Conjecture}
\theoremstyle{remark}
\newtheorem{definition}[theorem]{Definition}
\begin{document}

\begin{frontmatter}
\title{Graphical Finite Population Sampling}
\runtitle{A Graphical Approach to Finite Population Sampling}

\begin{aug}
\author[A]{\fnms{Bardia}~\snm{Panahbehagh}\\{panahbehagh@khu.ac.ir}\orcid{0000-0001-9122-7777}},
\address[A]{Faculty of Mathematical Sciences and Computer, Kharazmi University,\\Tehran, Iran}

\end{aug}

\begin{abstract}
This paper introduces an innovative and intuitive finite population sampling method that has been developed using a unique graphical framework. In this approach, first-order inclusion probabilities are represented as bars on a two-dimensional graph. By manipulating the positions of these bars, researchers can create a wide range of different sampling designs. This graphical visualization of sampling designs facilitates the exploration of alternative designs and may simplify certain aspects of the implementation compared to traditional mathematical algorithms. This novel approach holds significant promise for tackling complex challenges in sampling, such as achieving an optimal design. By applying a version of the greedy best-first search algorithm to this graphical approach, the potential for integrating intelligent algorithms into finite population sampling is demonstrated. 
\end{abstract}

\begin{keyword}[class=MSC]
\kwd[Primary ]{62D05}
\kwd{62F40}
\kwd[; secondary ]{94A17}
\end{keyword}

\begin{keyword}
\kwd{Inclusion Probabilities}
\kwd{Sampling Design}
\kwd{Optimal Design}
\kwd{Intelligent Algorithm}
\end{keyword}

\end{frontmatter}

\section{Introduction}\label{Sec.Intro}
The conventional approach to finite population sampling is to deal with sets and indices that  are found in a reference book such as \citet{till:2020} as follows. 
	Let $U=\{1,\dots,N\}$ be a population of size $N$, and define
	$$
	\mathcal{S} =\{s_1,s_2,\dots,s_T\}.
	$$
	as the set of all possible samples or subsets.
	The aim is to select a random sample~$\mathbb{S}$ from $\mathcal{S}$ with first-order inclusion probabilities (FIP) $\bm{\pi}=\{\pi_k,k\in U\},$ and size $n_\mathbb{S}$
	where
	$$
	\sum_{k\in U}\pi_k =E(n_\mathbb{S}).
	$$
Let $\bm{p}=\{p_t=Pr(\mathbb{S}=s_t);t=1,2,\dots,T\}$ be a sampling design that assigns a probability to each possible sample such that
	\begin{equation}
		p_t>0,\;\;\;\sum_{t=1}^T p_t =1, \mbox{ and }
  \sum_{ t; s_t\ni k} p_t = \pi_k
		\mbox{ for all } k\in U.
		\label{eqq}
	\end{equation}
	The second-order inclusion probabilities (SIP)  can be represented in matrix form as follows
 $$\Pi = [\pi_{k\ell} ]_{N\times N},\;\;\;\text{where }\pi_{k\ell}=\sum_{ t; s_t\ni k,\ell} p_t,\;\;\; k,\ell\in U.$$
 Also entropy of design $\bm{p}$ is defined as
 $$H(\bm{p})=-\sum_{t=1}^{T}p_t\ln(p_t).$$
 Now consider $y_k, x_k, k\in U$ as the main and auxiliary variables respectively. The primary goal of sampling is typically to estimate parameters such as the population total, denoted by, $$Y=\sum_{k\in U}y_k.$$

 The estimation of parameters including a total and its variance is based on the designs, FIP, and SIP. Given $\pi_k>0$ for all $k\in U$, an unbiased estimate of $Y$ is the Narain-Horvitz-Thompson \citep{nar:51, hor:tho:52} estimator (NHT),
	$$\hat{Y}=\sum\limits_{k\in \mathbb{S}}\frac{y_k}{\pi_k},$$
	with
	\begin{equation}\label{var}
		var(\hat{Y})=\sum\limits_{k\in U}\sum\limits_{\ell\in U}\frac{y_k}{\pi_k}\frac{y_\ell}{\pi_\ell}(\pi_{k\ell}-\pi_k\pi_\ell),
	\end{equation}
	and given that all the $\pi_{k\ell}$ are positive, an alternative unbiased estimator of the variance is
	\begin{equation}\label{varest}
		\hat{var}(\hat{Y})=\sum\limits_{k\in \mathbb{S}}\sum\limits_{\ell\in \mathbb{S}}\frac{y_k}{\pi_k}\frac{y_\ell}{\pi_\ell}\frac{\pi_{k\ell}-\pi_k\pi_\ell}{\pi_{k\ell}}.
	\end{equation}
    
This conventional approach employs designs rooted in theoretical and mathematical algorithms, which, while well-established, may pose challenges in certain scenarios. In particular, imposing additional constraints or requirements within these conventional designs often leads to increased complexity. This complexity may manifest itself in difficulties in implementing constrained designs with a given set of FIP, or in greater challenges in calculating the corresponding SIP, both of which are essential for accurate estimation and assessment. For example, constraints such as requiring all SIP to be positive, enabling or preventing the joint selection of certain units, or selecting units according to a stream (e.g., time order or sequence) can substantially increase the complexity of the design or its implementation.

In finite population sampling, improving the efficiency of sampling designs frequently requires introducing such constraints, which can complicate probability calculations. As an example, consider probability proportional to size sampling (PPS). In this unequal probability design, sample units are selected proportional to the size of an auxiliary variable, which leads to an efficient design when the auxiliary variable is correlated with the main variable of interest. The design with unequal probabilities with replacement was first introduced by \citet{han:hur:43}, while \citet{mad:49}, \citet{nar:51}, and \citet{hor:tho:52} proposed without-replacement versions of PPS. Selecting a PPS sample without replacement and with a fixed sample size can involve intricate procedures, particularly when additional design constraints are introduced. For this purpose, many different designs have been proposed, 50 of which are listed in \citet{bre:han:83} and \citet{til:06,till:2020}. All these designs can implement PPS that satisfies FIP (leading to unbiased estimation of certain parameters using the NHT estimator), but they may result in different SIP, which in turn affects the precision of the NHT estimator.

Although various methods exist for implementing unequal probability, without-replacement, fixed-size designs, identifying a design within this space that achieves higher precision under specific constraints remains a challenging and active area of research. For example, the cube method proposed by \citet{dev:til:04a} provides an elegant and efficient algorithm for balanced sampling, particularly when balancing on auxiliary variables is of primary importance. More recently, determinantal sampling designs, as explored by \citet{loonis2019determinantal} and \citet{loo:23}, offer a powerful and flexible framework, enabling exact control of inclusion probabilities through parameterization of Hermitian contracting matrices. These designs possess desirable correlation structures inherent in determinantal processes, which often lead to favorable variance properties in NHT estimators across multiple auxiliary variables. 
Accordingly, investigating the combinations of SIP that yield an optimal PPS continues to complement these advanced methodologies and remains an area of considerable interest.

Building on these existing methods and their limitations, this paper introduces a new approach aimed at offering additional flexibility and intuitive design capabilities. The core concept of this new approach is to provide a flexible graphical method for implementing equal- and unequal-probability, without-replacement, fixed-size sampling designs. The proposed graphical framework allows for the preservation of given FIP while offering flexibility in adjusting the SIP. While a simplified version of the method corresponds to the systematic sampling approach of \citet{mad:49}, the following sections demonstrate that, starting from a simple configuration (e.g., Madow’s systematic sampling) and iteratively adjusting the SIP under a fixed FIP, the procedure can generate a broad (typically continuous) family of feasible designs with finely tuned joint probabilities. By choosing suitable update rules or objective functions, one can obtain fixed-size designs or steer the construction toward solutions that optimize a chosen criterion. This versatility enables smooth transitions between distinct designs within a single framework and connects finite-population sampling with intelligent algorithmic searches for optimal designs (e.g., lower variance, improved spatial spread, or other application-driven criteria).

 The remainder of the paper is organized as follows.
Section~\ref{Sec.Main} presents the new approach.
Section~\ref{Sec.Reduce} develops procedures for generating new designs by adjusting SIP under fixed FIP.
Section~\ref{Sec.Fix} gives a fixed-size implementation, shows its baseline equivalence to \citet{mad:49}, and demonstrates how the same SIP-adjustment mechanism generates alternative fixed-size designs. Section~\ref{Sec:Inovation} introduces an intelligent method of searching for an optimal design. In Section \ref{Sec:Simulation} some simulations are conducted. Finally, Section~\ref{Sec.Clu} concludes the article with a summary and suggestions.

\section{Graphical Finite Population-Sampling (GFS)}\label{Sec.Main}
For sampling with a predetermined FIP $\bm{\pi}$, in this study a graphical approach is proposed, as presented in Algorithm~\ref{Al00};
	
	\begin{algorithm}[H]
		\caption{GFS} \label{Al00}
		\begin{algorithmic}[1]
			\State Create a two-dimensional coordinate system, indicate the population units on the horizontal axis, and consider the vertical axis for the FIP between 0 and 1,
			\For{$k=1,2,\dots,N$}
			\State Draw a bar of length $\pi_k$ above point $k$ at an arbitrary or random position such that the bar is completely between 0 and 1,
			\EndFor
			\State Select a random point $r$ between 0 and 1, plot it on the vertical axis, and draw a horizontal line, say a random line, from the selected point,
			\State The final sample units are all units for which the random line passes through the bars of their FIP.  
		\end{algorithmic}
	\end{algorithm}
Figure~\ref{fig:3random} provides an example of Algorithm~\ref{Al00} with \begin{equation}\label{FIPexample}
    \bm{\pi} = \{.38, .30, .42, .65, .25, .10, .90\},
\end{equation} each in different color.
\begin{figure}[!htbp]
    \centering
    \includegraphics[width=120mm]{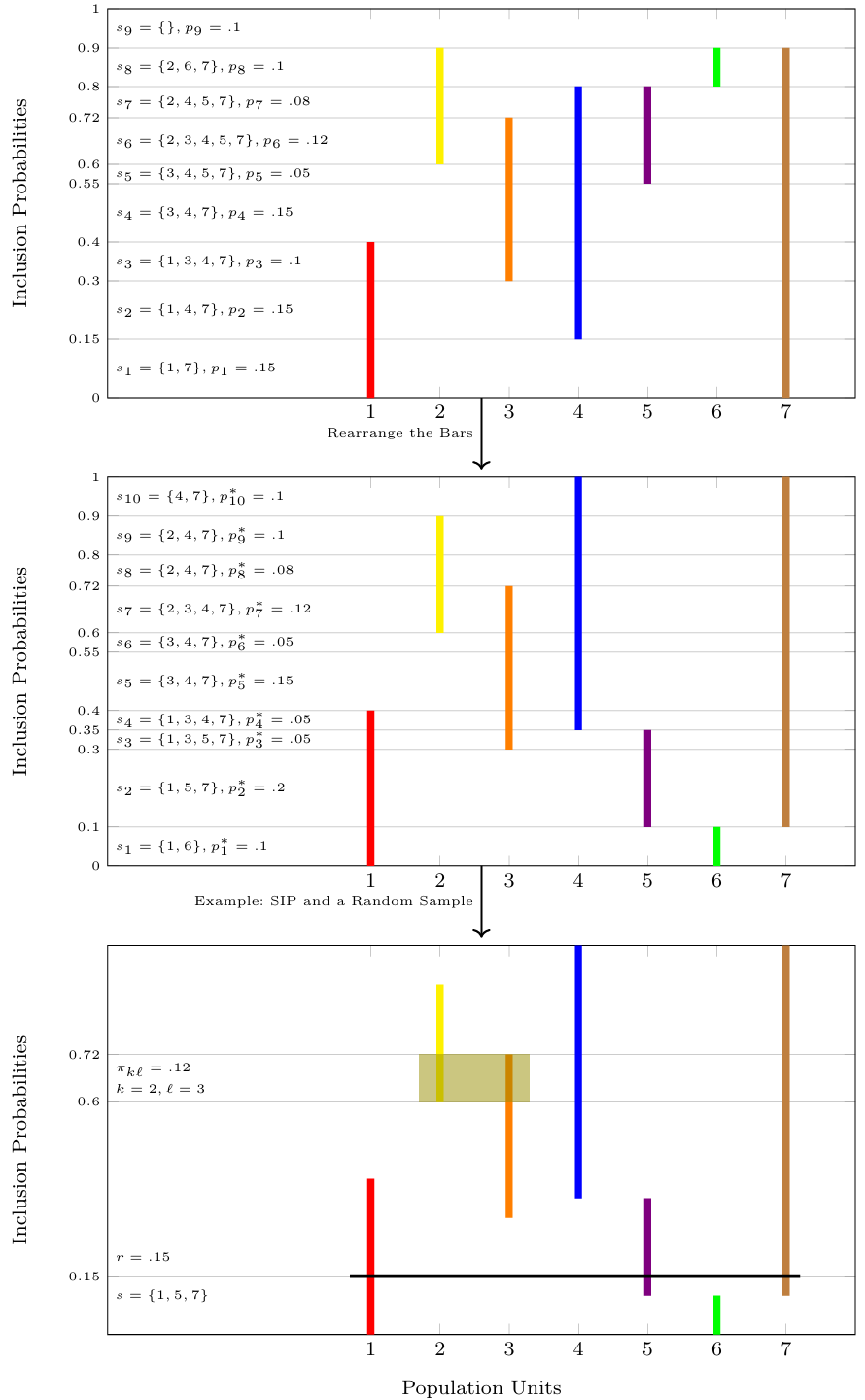}
 \caption{ The top two plots depict different arbitrary arrangements of bars with $\bm{\pi} = \{.38, .30, .42, .65, .25, .10, .90\}$ shown in different colors based on Algorithm \ref{Al00}, in which both arrangements respect the specified FIP. The designs ($\bm{p}$ and $\bm{p^*}$) are calculated and presented to the left of the plots, showing that any change in the position of the bars results in creating a new design. The bottom plot illustrates two examples: (1) The calculation of the SIP for units $k=2$ and $\ell=3$, highlighted in gray, resulting in $\pi_{k\ell}=.12$, (2) A random line (horizontal black line) is drawn, selecting units $1$, $5$ and $7$ as the final sample, $s=\{1,5,7\}$.
}
    \label{fig:3random}
\end{figure}
The top two plots in Figure~\ref{fig:3random} illustrate two different arrangements of bars, each representing a distinct design. Both arrangements adhere to the FIP constraint. The accompanying designs (samples and probabilities) to the left illustrate that any modification in the position of the bars results in a new design, as shown by the transition from design $\bm{p}$ with $T = 9$ in the top plot to design $\bm{p^*}$ with $T = 10$ in the middle plot, where the bars for $\pi_4$, $\pi_5$, $\pi_6$, and $\pi_7$ have been rearranged. It is also evident that in the middle plot, $s_5$ could be merged with $s_6$, and $s_8$ with $s_9$, which would yield $T = 8$ — representing essentially the same design as that depicted with $T = 10$.

The bottom plot in Figure \ref{fig:3random} presents two examples in design $\bm{p^*}$ to illustrate different concepts related to GFS: 
\begin{itemize}
    \item The first example demonstrates how to calculate the SIP for units, which can be determined by finding the intersection of the bars. In this illustration, the second-order inclusion probability for the units $k=2$ and $\ell=3$ is highlighted in gray, resulting in a value of $\pi_{k\ell}=.12$ as specified by the design.
\item The second example involves drawing a random line, shown as a horizontal black line, $r=.15$. This random line passes through the FIP of units $1$, $5$ and $7$, thereby selecting these three units as the final sample. This visual demonstrates how a random selection process can determine the final sample based on GFS.
\end{itemize}

Using GFS, one can obtain an unbiased estimator of \(Y\) while preserving the FIP. Subsequently, by rearranging the bars, the SIP can be modified, allowing control over the estimator’s variance (and hence its precision).
	
 The next Result shows that the method preserves the FIP.
	
	\begin{theorem}
		In the GFS approach, with any arrangement of the bars,
		$$Pr(k\in \mathbb{S})=\pi_k;\;\; k=1,2,...,N,\;\;\;E(n_\mathbb{S})=\sum\limits_{k\in U}\pi_k.$$
	\end{theorem}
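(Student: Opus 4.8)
The plan is to translate the geometric picture of Algorithm~\ref{Al00} into an elementary computation with the uniform distribution on $[0,1]$. Once the bars have been drawn, the bar above unit $k$ occupies a subinterval $I_k=[a_k,\,a_k+\pi_k]\subseteq[0,1]$; the left endpoint $a_k$ encodes the chosen arrangement and may be arbitrary or random, but in every admissible arrangement it satisfies $0\le a_k$ and $a_k+\pi_k\le 1$. The random line corresponds to $r\sim\mathrm{Uniform}[0,1]$, and the selection rule says precisely that $k\in S$ if and only if $r\in I_k$.

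First I would prove the FIP identity. Because $r$ is uniform on $[0,1]$ and $I_k\subseteq[0,1]$,
$$\Pr(k\in S)=\Pr(r\in I_k)=|I_k|=(a_k+\pi_k)-a_k=\pi_k,$$
with $|\cdot|$ denoting length. The position $a_k$ cancels, so the value does not depend on how the bars are arranged — which is exactly the phrase ``with any arrangement'' in the statement. Whether a bar is regarded as closed, open, or half-open is irrelevant, since the endpoints form a Lebesgue-null set for the law of $r$.

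Next I would obtain the expected sample size by writing it as a sum of indicators, $n_s=\sum_{k\in U}\mathbf{1}\{k\in S\}$, and taking expectations term by term:
$$E(n_s)=\sum_{k\in U}\Pr(k\in S)=\sum_{k\in U}\pi_k,$$
using linearity of expectation together with the previous display.

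I do not expect a genuine obstacle here: the only thing to watch is the measure-zero ambiguity when $r$ lands on a shared endpoint of two bars, and this changes neither $\Pr(k\in S)$ nor $E(n_s)$. The whole statement reduces to the observation that the length of an interval inside $[0,1]$ depends only on that length and not on the interval's location; the substantive content of GFS — the effect of the arrangement on the second-order inclusion probabilities $\pi_{k\ell}$ — lies beyond this theorem.
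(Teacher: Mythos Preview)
Your argument is correct and is exactly what the paper has in mind: its entire proof is the single line ``The proof follows directly from Algorithm~\ref{Al00},'' and you have simply written out that direct argument explicitly. There is no difference in approach, only in the level of detail.
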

 \begin{proof}
     The proof follows directly from Algorithm \ref{Al00}.
 \end{proof}
 Algorithm \ref{Al00} suffers from two key limitations: a high propensity for zero SIP and variable sample size. As an example consider the top plot in Figure \ref{fig:3random}, where the sample size fluctuates among $n_s=0,2,3,4,5$, and the SIP for $k=1, \ell=2,5,6$ are all zero. These deficiencies warrant further exploration, as they significantly impact the algorithm's efficiency and applicability in real-world scenarios. Subsequent sections present novel algorithms specifically developed to overcome these limitations, significantly improving the overall effectiveness of the sampling process.

\section{Generating New Designs by Adjusting SIP}\label{Sec.Reduce}

Even a zero element in the SIP matrix precludes the existence of an unbiased variance estimator for the NHT estimator. Therefore, while a design with a fully positive SIP matrix does not guarantee higher efficiency, it is still of interest to construct such designs, as they allow for unbiased variance estimation. In this section, an algorithm is proposed for the reduction of the zero elements in the SIP matrix.

 To begin, please note that initially each bar represents the complete portion associated with a given $\pi_k$. 
When a bar is considered in smaller parts—either selected as a piece or obtained by dividing it during the algorithm—these parts are referred to as \emph{segments}.
Then, consider the $ y $-axis partitioned into $ D $ strips, denoted by $ \Delta_1, \Delta_2, \dots, \Delta_D $, corresponding to the samples $ s_1, s_2, \dots, s_D $. Note that $ D \geq T $, as some samples may appear multiple times across the $y$-axis partitioning. Now, select two strips, $ \Delta_i $ and $ \Delta_j $, with respective heights $ p_i $ and $ p_j $.
Inside each strip, consider two smaller substrips $\delta_i$ and $\delta_j$, both of size $v_{i,j}(\alpha) = \alpha\times \min(p_i, p_j)$ for $\alpha \in [0,1]$, and define:

\begin{eqnarray*}
    \delta_{i}(k)=\left\{
    \begin{matrix}
        1 & \;\;\;\; \text{if the bar for unit $k$ completely covers the height of substrip $\delta_i$}, \\\\
        0 & \;\;\;\; \text{otherwise}.
    \end{matrix}
    \right.
\end{eqnarray*}

Next, if $s_{i-j} = s_i \setminus s_j$ is nonempty, select $k \in s_{i-j}$ and interchange the segment of $\pi_k$ in $\delta_i$ with $\delta_j$. In other words, set $\delta_{i}(k) = 0$ and $\delta_{j}(k) = 1$, which potentially leads to the following outcomes:

\begin{itemize}
    \item sample $s_i$ with probability $p_i$ splitting into two samples:
    \begin{itemize}
        \item[1:] $s_{i,1} = s_i,\;\;\;\;\;\;\;\;\;\;\;\;\;\; p_{i,1} = p_i - v_{i,j}(\alpha)$,
        \item[2:] $s_{i,2} = s_i \setminus \{k\},\;\;\;\;\; p_{i,2} = v_{i,j}(\alpha)$,
    \end{itemize}
    \item sample $s_j$ with probability $p_j$ splitting into two samples:
    \begin{itemize}
        \item[3:] $s_{j,1} = s_j,\;\;\;\;\;\;\;\;\;\;\;\;\;\; p_{j,1} = p_j - v_{i,j}(\alpha)$,
        \item[4:] $s_{j,2} = s_j \cup \{k\},\;\;\;\;\; p_{j,2} = v_{i,j}(\alpha)$.
    \end{itemize}
\end{itemize}

In summary, to potentially reduce zero SIP and generate new designs, in Algorithm~\ref{Al00} we randomly select a segment of a bar and move it to a new position, provided a vacant location is available. Details are given in Algorithm~\ref{Al3}.

Before introducing the algorithm, it is important to clarify the use of the term “randomly” in this context.
Randomly selecting two strips refers to a random mechanism that can be implemented in different ways. 
In the simulations presented in this paper, we use a simple random sample of size two from all possible pairs of strips. 
Alternatively, the selection can be made with probabilities proportional to size, where the value $p_i$ denotes the height of strip $i$, which in turn represents the design weight of the respective sample. 
The choice of selection method may influence the rate of convergence of the algorithm toward the intended design properties.

\begin{algorithm}
    \caption{Chaotic GFS: Design Generator} \label{Al3}
    \begin{algorithmic}[1]
        \State Create a two-dimensional coordinate system, placing the population units on the horizontal axis and using the vertical axis for the FIP, ranging from 0 to 1, and build the sampling design,
        \For{$m = 1, 2, \dots, M$}
            \State Randomly select two strips $\Delta_i$ and $\Delta_j$, and within them two substrips $\delta_i$ and $\delta_j$ of size $v_{i,j}(\alpha)$ with $0<\alpha<1$, 
            \If{$s_{i-j}$ is nonempty}
            \State Randomly select $k\in s_{i-j}$,
                \State Set $\delta_{i}(k) = 0$ and $\delta_{j}(k) = 1$,
            \EndIf
        \EndFor
        \State Draw a random line,
        \State The final sample consists of all units for which the random line passes through their respective bars of the FIP.
    \end{algorithmic}
\end{algorithm}
To elucidate the core concept of Algorithm~\ref{Al3}, examine the plots in Figure~\ref{fig:2strip}. In the top plot, consider $\Delta_2$ and $\Delta_5$ and within them observe that $\delta_{5}(4)=1$ and $\delta_{2}(4)=0$. Subsequently, we can truncate the corresponding segment of $\pi_4$ in $\delta_5$ and relocate it to the new position in $\delta_2$, resulting in $\delta_{2}(4)=1$ and $\delta_{5}(4)=0$, as depicted in the bottom plot. This implementation of Algorithm~\ref{Al3} with $M=1$ generates a new design with more possible samples. For example, in the top plot, $\pi_{k\ell}=0$ for $k=4$ and $\ell=5$, while in the new design, we have $\pi_{k\ell}=0.07$ for $k=4$ and $\ell=5$.

It is noteworthy that, in GFS, the SIP are exactly computable. For any refinement level $M$, Algorithm~\ref{Al3} proceeds under fixed FIP until the configuration stabilizes; sampling is then carried out from this final design, and the operative SIP are those induced by that stabilized configuration.
\begin{figure}[!htbp]
    \centering
    \includegraphics[width=120mm]{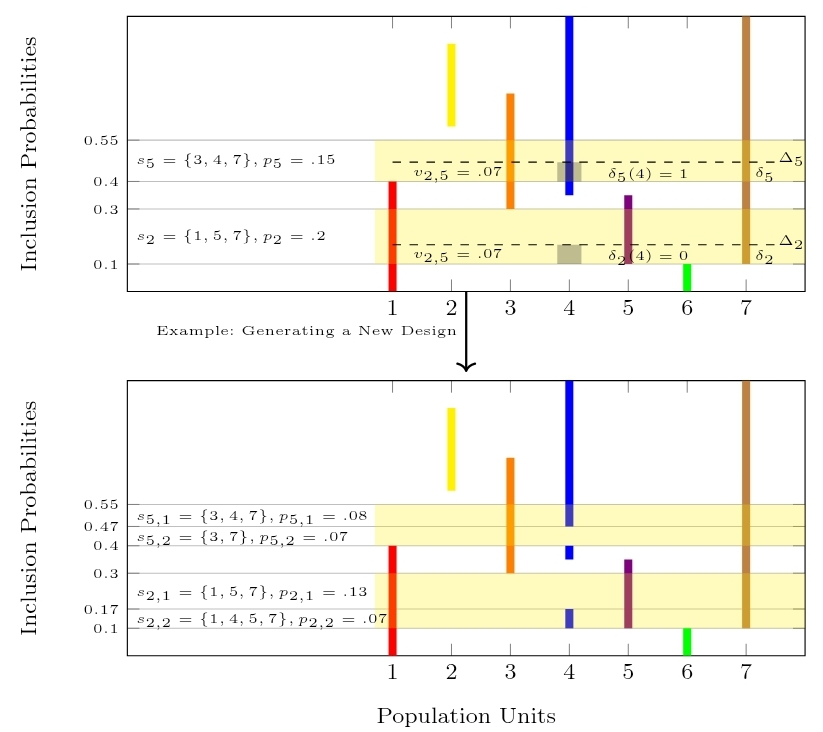}
 \caption{
 An example of generating a new design $\bm{p^*}$ (the middle plot of Figure~\ref{fig:3random}). By selecting $\Delta_2$ and $\Delta_5$ with heights $p^*_2 = 0.2$ and $p^*_5 = 0.15$, and choosing substrips $\delta_2$ and $\delta_5$ of size $v_{2,5}(7/15)=0.07$, it is possible to interchange segments between substrips $\delta_2$ and $\delta_5$ for unit $k=4$, which yields four samples: $s_{2,1}$, $s_{2,2}$, $s_{5,1}$, and $s_{5,2}$.
}
    \label{fig:2strip}
\end{figure}

In Algorithm~\ref{Al3}, each FIP-preserving segment exchange empirically disperses joint mass over a larger set of feasible samples and tends to reduce the number of zero entries in the SIP matrix. Although monotonic improvement at every step is not claimed, this behavior suggests a progressive increase in distributional spread, motivating the following conjecture for future work on this class of algorithms.

\begin{conjecture}\label{Conj.Poiss}
Chaotic GFS converges to Poisson sampling as $M \to \infty$.
\end{conjecture}

	\section{Fixed-size Algorithm of GFS}\label{Sec.Fix}
 Building upon the resolution of the zero SIP issue in the previous section, this section addresses the challenge of random sample size within the framework of Algorithm \ref{Al00}.
 
	To overcome this challenge, one of the simplest approaches is to arrange the bars based in a cumulative manner as follows. The first bar starts from zero to $\pi_1$, the second bar starts from $\pi_1$ to $\min{(\pi_1+\pi_2, 1)},$ and if the minimum is 1, then the remainder of $(\pi_1+\pi_2)-1$ wraps around from zero, and so forth. Details are presented in Algorithm \ref{Alfixed}. 
	\begin{algorithm}[H]
		\caption{Fixed-size GFS, equivalent to \citet{mad:49}} \label{Alfixed}
		\begin{algorithmic}[1]
			\State In Algorithm \ref{Al00}, the first bar covers $[0,\pi_1]$, and set $b_1=\pi_1$,
			\For{$k=2,3,...,N$}
			\If{$\pi_k+b_{k-1}\leq 1$} $k^{th}$ bar covers $(b_{k-1},\pi_k+b_{k-1}]$, set $b_k=\pi_k+b_{k-1}$,
			\Else{$\;\;\;\;\;\;\;\;\;\;\;\;\;\;\;\;\;\;\;\;\;\;\;\;\;\;\;\;\;$} $k^{th}$ bar covers $(b_{k-1},1]\cup [0,\pi_k+b_{k-1}-1]$, set $b_k=\pi_k+b_{k-1}-1$,
            \EndIf
			\EndFor
		\end{algorithmic}
	\end{algorithm}

\begin{figure}[h]
    \centering
    \includegraphics[width=120mm]{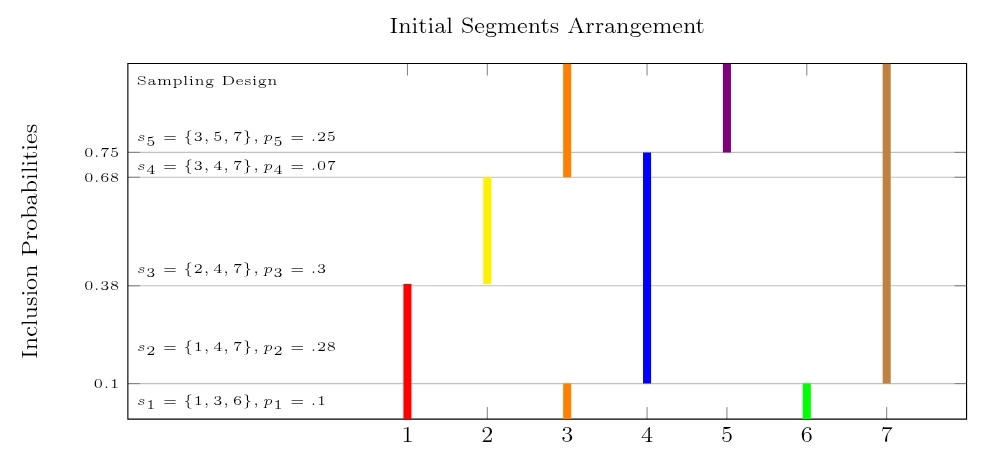}
\caption{The plot depicts a fixed-size version (Algorithm \ref{Alfixed}) of GFS with $\bm{\pi} = \{.38, .3, .42, .65, .25, .1, .9\}$, arranged sequentially to create a fixed-size design.}
    \label{fig:fixed}
\end{figure}
Figure \ref{fig:fixed} presents a fixed-size version of GFS with $\bm{\pi} = \{.38, .3, .42, .65, .25, .1, .9\}$ arranged sequentially to form a design with fixed size $n=\sum\limits_{k=1}^7\pi_k=3$. In this arrangement, the FIP are aligned to maintains the fixed size constraint. 

Interestingly, Algorithm \ref{Alfixed} can be viewed as a vertical adaptation of the systematic sampling method proposed by \citet{mad:49}. However, in the following, it will be demonstrated that unlike the systematic sampling described by \citet{mad:49}, this novel approach possesses the capability to emulate various traditional and novel designs, including many fixed-size unequal probability sampling methods.

To generate a new design from the Madow version of GFS while preserving the FIP and the fixed-size constraint, a strategy analogous to that described in Section~\ref{Sec.Reduce} can be employed.
 Specifically, two strips corresponding to two samples can be selected, and two segments of their bars can be interchanged, provided that an empty space exists in the new positions. This approach preserves fixed sample sizes while generating a new design. However, to present a complete algorithm for this process, it is necessary to define two key concepts.

\begin{definition}
    Consider two substrips on the vertical axis, $\delta_i\subseteq \Delta_i$ and $\delta_j\subseteq \Delta_j$. Two segments, $\delta_{i}(k)$ and $\delta_{j}(\ell)$, are interchangeable if 
    $$\delta_{i}(k) = 1, \;\;\; \delta_{j}(\ell) = 1;\;\;\;\;\; \delta_{j}(k) = 0, \;\;\; \delta_{i}(\ell) = 0.$$
\end{definition}

\begin{definition}
    Interchanging two interchangeable segments $\delta_{i}(k)$ and $\delta_{j}(\ell)$ means setting 
    $$\delta_{i}(k) = 0, \;\;\; \delta_{j}(\ell) = 0;\;\;\;\;\; \delta_{j}(k) = 1, \;\;\; \delta_{i}(\ell) = 1.$$
\end{definition}

Now, consider two strips $\Delta_i$ and $\Delta_j$ with heights $p_i$ and $p_j$, and two substrips $\delta_i$ and $\delta_j$ of the same size $v_{i,j}(\alpha)$. If $s_{i-j}$ and $s_{j-i}$ are nonempty, select $k \in s_{i-j}$ and $\ell \in s_{j-i}$ randomly, and interchange them, potentially leading to:

\begin{itemize}
    \item Sample $s_i$ with probability $p_i$ splits into two samples:
    \begin{itemize}
        \item[1:] $s_{i,1} = s_i,\;\;\;\;\;\;\;\;\;\;\;\;\;\;\;\;\;\;\;\; p_{i,1} = p_i - v_{i,j}(\alpha)$,
        \item[2:] $s_{i,2} = s_i \cup \{\ell\} \setminus \{k\},\;\; p_{i,2} = v_{i,j}(\alpha)$,
    \end{itemize}
    \item Sample $s_j$ with probability $p_j$ splits into two samples:
    \begin{itemize}
        \item[3:] $s_{j,1} = s_j,\;\;\;\;\;\;\;\;\;\;\;\;\;\;\;\;\;\;\;\; p_{j,1} = p_j - v_{i,j}(\alpha)$,
        \item[4:] $s_{j,2} = s_j \cup \{k\} \setminus \{\ell\},\;\; p_{j,2} = v_{i,j}(\alpha)$.
    \end{itemize}
\end{itemize}
Details are presented in Algorithm \ref{Al2}.
\begin{algorithm}[H]
    \caption{Chaotic fixed-size GFS} \label{Al2}
    \begin{algorithmic}[1]
        \State Implement Algorithm \ref{Alfixed}.
        \For{$m = 1, 2, \dots, M$}
            \State Select randomly two strips $\Delta_i$ and $\Delta_j$, and within them two substrips $\delta_i$ and $\delta_j$ with size $v_{i,j}(\alpha)$, $0<\alpha<1$, 
            \If{$s_{i-j}$ and $s_{j-i}$ are nonempty},
            \State select $k\in s_{i-j}$ and $\ell\in s_{j-i}$ randomly,
                \State Interchange $\delta_{i}(k)$ and $\delta_{j}(\ell)$.
            \EndIf
        \EndFor
    \end{algorithmic}
\end{algorithm}
 
 As an example of Algorithm \ref{Al2}, consider Figure \ref{fig:2fixed}, in which two strips, $ \Delta_2 $ and $ \Delta_5 $, are selected and by using two substrips of size $ v_{2,5}(10/25) = .1 $, segments of units 1 and 5 are interchanged to generate a new design.

\begin{figure}[h]
    \centering
    \includegraphics[width=120mm]{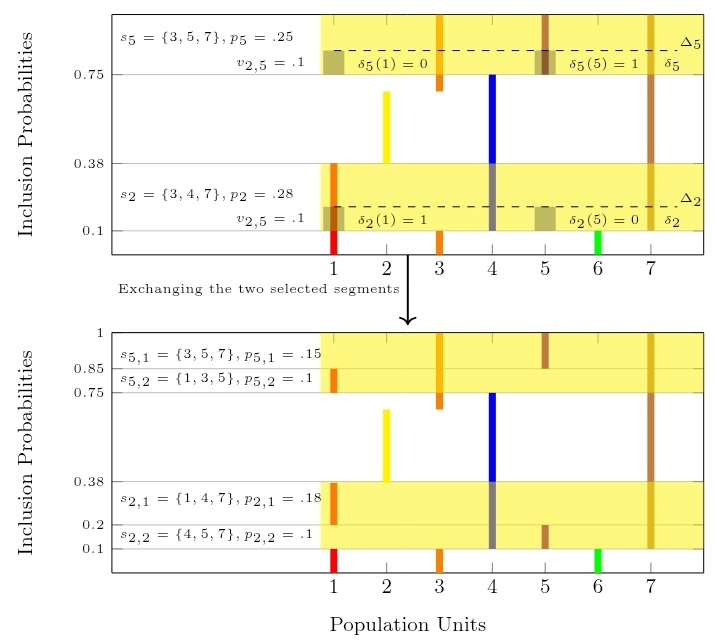}
\caption{The top plot depicts a fixed-size version of GFS with $\bm{\pi} = \{0.38, 0.30, 0.42, 0.65, 0.25, 0.10, 0.90\}$, arranged sequentially to create a fixed-size design in which two strips, $\Delta_2$ and $\Delta_5$, are selected. Within these strips, two substrips of size $v_{2,5} = v_{2,5}(10/25) = 0.10$ are indicated by dashed lines. Since units $1$ and $5$ are interchangeable in this situation, they are interchanged, resulting in two new samples, $s_{2,2}$ and $s_{5,2}$, alongside the original samples, $s_{2,1}$ and $s_{5,1}$, depicted in the bottom plot.}

    \label{fig:2fixed}
\end{figure}

The following theorem states the basic invariants of the chaotic fixed-size procedure for Algorithm~\ref{Al2}.

\begin{theorem}
		In the chaotic fixed-size GFS approach, based on 
  Algorithms \ref{Al2},
		$$Pr(k\in \mathbb{S})=\pi_k;\;\; k=1,2,\dots,N,$$
$$E(n_\mathbb{S})=\sum\limits_{k\in U}\pi_k,$$
  and if $E(n_\mathbb{S})$ is an integer, the design is fixed-size; equivalently:
  $$var(n_\mathbb{S})=0.$$
	\end{theorem}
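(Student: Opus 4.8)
The plan is to track two quantities through the execution of Algorithm~\ref{Al2} and show each is invariant under a single interchange step. Encode a bar configuration by the function assigning to each height $r\in[0,1]$ the set $A(r)\subseteq U$ of units whose bar covers $r$; the random-line step then picks $r$ uniform on $[0,1]$ and returns $S=A(r)$, so that $Pr(k\in S)=\mathrm{Leb}\{r:k\in A(r)\}$ and $n_s=|A(r)|$. The two invariants I would use are: (i) $\mathrm{Leb}\{r:k\in A(r)\}=\pi_k$ for every $k$; and (ii) $|A(r)|$ is almost everywhere constant equal to $n:=\sum_{k\in U}\pi_k$ whenever $n$ is an integer.

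First I would check the base configuration produced by Algorithm~\ref{Alfixed}. There the bar of unit $k$ is the reduction modulo $1$ of the interval $[\sum_{j<k}\pi_j,\ \sum_{j\le k}\pi_j]$, which has length $\pi_k$, giving (i). For (ii), when $n$ is an integer these intervals for $k=1,\dots,N$ tile $[0,n]$, and for a.e.\ $r\in[0,1]$ the $n$ points $r,r+1,\dots,r+n-1$ each lie in exactly one of them; hence exactly $n$ bars cover $r$. This is the systematic-sampling (Madow) counting argument adapted to the vertical layout.

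Next I would analyze one interchange step. By the definition of interchangeability, before the step $\delta_i(k)=1$, $\delta_i(\ell)=0$, $\delta_j(k)=0$, $\delta_j(\ell)=1$, so both substrips have room and the operation is well defined; afterwards $A(r)$ is modified only on $\delta_i$ (replace $k$ by $\ell$) and on $\delta_j$ (replace $\ell$ by $k$), two sets of the same measure $v_{i,j}(\alpha)$. Consequently unit $k$ loses measure $v_{i,j}(\alpha)$ inside $\Delta_i$ and gains the same measure inside $\Delta_j$, so $\mathrm{Leb}\{r:k\in A(r)\}$ is unchanged, and likewise for $\ell$; every other unit is untouched, so (i) persists. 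Also $|A(r)|$ is unchanged at every $r$, since each modified height merely swaps one active unit for another, so (ii) persists. Induction on $m=1,\dots,M$ then propagates both invariants to the final configuration, and the earlier remark that sampling is performed on the stabilized configuration lets us read off the design from it.

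Finally I would assemble the three claims: (i) gives $Pr(k\in S)=\pi_k$; summing, $E(n_s)=\sum_{k\in U}Pr(k\in S)=\sum_{k\in U}\pi_k$; and when this sum is an integer, (ii) gives $n_s=n$ almost surely, hence $var(n_s)=0$. I expect the only genuine obstacle to be the base-case verification of (ii) — making the wrap-around tiling/counting argument rigorous, including the measure-zero set of heights $r$ landing on an endpoint $b_k$ — whereas the interchange step and the induction are essentially bookkeeping made clean by the interchangeability definition.
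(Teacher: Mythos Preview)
Your proposal is correct and is precisely the explicit verification that the paper omits: the paper's own proof is the single sentence ``The proof follows directly from Algorithms~\ref{Al2},'' so you have supplied the invariant-tracking argument (bar-length and per-height cardinality preserved under each interchange, with the Madow base case) that the paper leaves to the reader. The approach is the same in spirit; you have simply made the bookkeeping rigorous.
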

 \begin{proof}
     The proof follows directly from Algorithm \ref{Al2}.
 \end{proof}
 In parallel with Conjecture~\ref{Conj.Poiss}, with nothing that the chaotic fixed-size procedure preserves the prescribed FIP and the sample size while empirically spreading probability mass across feasible size-$n$ samples; in particular, it tends to increase entropy subject to $|\mathbb{S}|=n$. This motivates the following conjecture that the limiting design coincides with the conditional Poisson design—the fixed-size analogue often described as maximum-entropy sampling \citep{til:06}.
 \begin{conjecture}
     Chaotic fixed-size GFS with $M \rightarrow \infty$, converges to  maximum-entropy sampling.
 \end{conjecture}

Figure~\ref{Fig.MaxEnt} illustrates a fixed-size design for the population~\eqref{FIPexample} generated by Algorithm~\ref{Al2} with $\alpha=0.5$ and $M=10{,}000$, characterized by broadly distributed design probabilities and substantially fewer zero SIP entries.

 \begin{figure}[H]
		\centering
		\includegraphics[width=75mm]{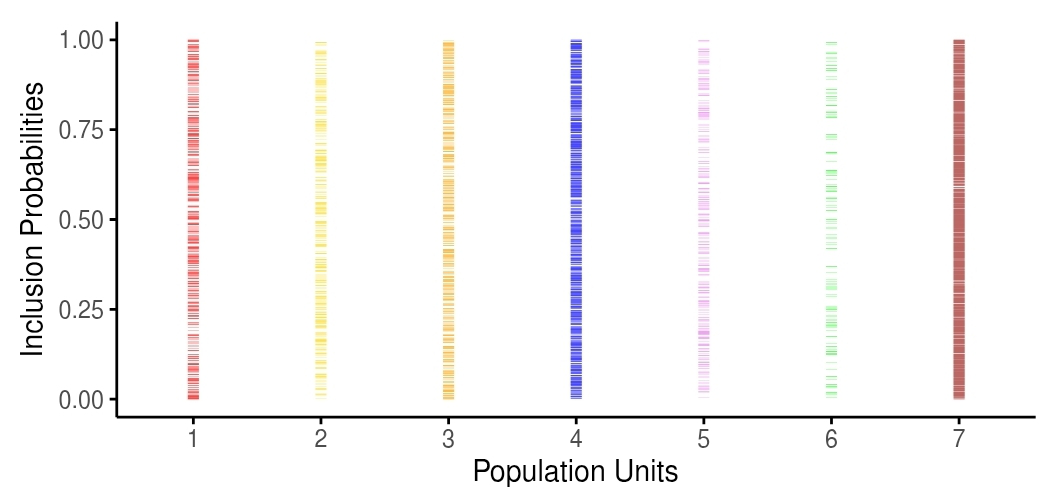} 
		\caption{Implementing of Algorithm \ref{Al2} on the data of population \eqref{FIPexample} with $\alpha = .5$ and $M = 10,000$.}\label{Fig.MaxEnt}
	\end{figure}

The framework preserves the prescribed FIP, supports fixed-size sampling, and permits the SIP to be tuned through local updates, thereby generating a broad family of feasible designs with exactly computable SIP. These properties make it natural to embed the construction within an optimization loop that seeks designs aligned with problem-specific goals while respecting the prescribed constraints.

 \section{Optimal GFS (OGFS)}\label{Sec:Inovation}
As noted by \citet{thompson1996adaptive}, there is no globally optimal sampling design for finite population sampling. The optimality of a design depends on several factors, including the structure of the main variable, the FIP, and the estimator used. In discussing optimal design, the focus is on how, given a vector of FIP, it is possible to implement an unequal probability sampling method that outperforms conventional well-known sampling designs. It is also worth noting that this discussion primarily aims to enhance the design stage when using the NHT estimator. Further research could explore improving both the design and estimation stages using GFS, which presents an interesting avenue for future investigation.

With OGFS, the GFS's ability to generate diverse designs illustrates a novel pathway, potentially offering a simple and efficient means for discovering new, effective designs.

In chaotic (random- or fixed-size) GFS, the FIP remain fixed, which facilitates the creation of designs with a very large and flexible set of possible SIP by rearranging the bars. Leveraging proper auxiliary variables, it becomes feasible to construct numerous designs. Once the position of the bars (segments) is fixed, the design becomes fully known in GFS, enabling the calculation of criteria such as the variance, maximum error, etc., of the NHT estimator based on some auxiliary variables. Therefore, one approach is to fix a criterion, rearrange the bars to change the SIP, generate candidate designs, and select the best according to that criterion.
In the simplest case, consider two auxiliary variables $x$ and $z$ with some reasonable correlations with $y$, one for building the FIP and another for evaluating designs based on some criteria. Using
	\begin{equation}\label{PPSP}		\pi_k=min(cx_k,1),\;\;\;\;\text{where}\;\;\;\; \sum\limits_{k\in U}\pi_k=n,
	\end{equation}
leads to FIP based on $x$ with sample size $n$, known as PPS. In this design, we know that $var(\hat{X})=0$. Then using an evaluation variable $z$, consider a parameter, say $\theta_z$, rearranging the bars a large number of times based on a random or intelligent algorithm, and selecting the best arrangement to estimate $\theta_z$ based on the NHT estimator of $z$, results in selecting an optimal design. 

Any search algorithm over designs requires a criterion to assess its proximity to the desired outcome. In the context of optimizing GFS, various design parameters can be considered as a criterion. The following equations outline three such criteria for OGPS:

\begin{eqnarray*}\label{crit}
C_1(\theta_z=Z,\bm{p}) &=& \sum\limits_{t=1}^T(\hat{Z}_{s_t}-Z)^2 p_t,\nonumber\\
C_2(\theta_z=Z,\bm{p}) &=& \sum\limits_{t=1}^T |\hat{Z}_{s_t}-Z| p_t,\nonumber\\
C_3(\theta_z=Z,\bm{p}) &=& \max_{t=1,\dots,T} |\hat{Z}_{s_t}-Z|,
\end{eqnarray*}

where 

\[
Z = \sum\limits_{k=1}^N z_k, \quad \hat{Z}_{s_t} = \sum\limits_{k \in s_t} \frac{z_k}{\pi_k}.
\]

It is worth noting that while the process of optimizing GFS using an auxiliary variable $z$ as a balancing target shares conceptual similarities with balanced sampling methods \citep{dev:til:04a, loo:23}, OGFS does not explicitly enforce balance constraints at each step. Instead, it seeks designs that minimize a specified criterion (such as variance or maximum error) through intelligent rearrangement of segments, which may lead to balanced or near-balanced samples as a byproduct rather than as a direct requirement.

\subsection*{An Intelligent Design Search Algorithm}
\label{sec:algorithm_explanation}

A variation of a greedy best-first search approach is presented to search for an optimal design. It begins with the initialization of auxiliary variables: $x$ for constructing FIP and $z$ for evaluating designs. An initial design is then established based on the FIP, serving as the starting point for the optimization process.

The algorithm proceeds through several iterations, say $\Lambda$, where in each cycle, $N_{node}$ number of new candidate designs are generated by modifying the last design based on Algorithm \ref{Al2}. This modification ensures the exploration of diverse configurations. Each newly generated design undergoes evaluation based on a predefined criterion, denoted as $C(\theta_z=Z,\bm{p})$. At the beginning, the initial design is considered the best design. Whenever a superior design is identified, it replaces the previous best design. 

Two critical sets are established during this process: the open set and the closed set. The open set maintains designs, prioritized by their costs, that are yet to be explored. In contrast, the closed set keeps track of designs that have already been evaluated, ensuring that the algorithm does not redundantly process the same design multiple times. Throughout the iterations, the algorithm tracks the most effective design. Afterwards, the set of potential designs is managed to retain only the most promising candidates, replacing less efficient designs when necessary. This management ensures a focus on the most viable options for further refinement.

After all iterations are completed, the algorithm returns the best design found, representing the optimal configuration according to the defined criterion. For further details, refer to Algorithm \ref{alg:optimize_design}.

\begin{algorithm}[H]
\caption{OGFS: a Greedy Best-First Search Variant}\label{alg:optimize_design}
\begin{algorithmic}[1]
\State \textbf{Inputs:} auxiliary variable $x$ (to construct the FIP), evaluation variable $z$, criterion $C(\theta_z=Z,\bm{p})$ to be \emph{minimized}, iteration budget $\Lambda$, branching factor $N_{\text{node}}$, and capacity $\texttt{max\_open\_set\_size}$.
\State Create the \texttt{initial\_design} from the FIP using Algorithm~\ref{Alfixed}.
\State Initialize $\texttt{open\_set}\gets\{\}$ (priority queue keyed by criterion) and $\texttt{closed\_set}\gets\{\}$ (evaluated designs).
\State Compute $\texttt{init\_criterion}\gets C(\theta_z=Z,\bm{p}(\texttt{initial\_design}))$.
\State Insert $(\texttt{init\_criterion},\,\texttt{initial\_design})$ into \texttt{open\_set}.
\State $\texttt{best\_design}\gets \texttt{initial\_design}$; \quad $\texttt{best\_criterion}\gets \texttt{init\_criterion}$.
\For{$\texttt{iterations}=1$ \textbf{to} $\Lambda$}
  \State Extract and remove $(\texttt{current\_criterion},\,\texttt{current\_design})$ with minimum criterion from \texttt{open\_set}.
  \For{$i=1$ \textbf{to} $N_{\text{node}}$}
    \State Generate $\texttt{new\_design}\leftarrow \textsc{Apply-Algorithm \ref{Al2} }(\texttt{current\_design})$ 
    \State $\texttt{new\_criterion}\leftarrow C(\theta_z=Z,\bm{p}(\texttt{new\_design}))$
    \If{$\texttt{new\_design}\notin \texttt{closed\_set}$}
      \If{$\texttt{new\_criterion}<\texttt{best\_criterion}$}
        \State $\texttt{best\_design}\gets \texttt{new\_design}$; \quad $\texttt{best\_criterion}\gets \texttt{new\_criterion}$
      \EndIf
      \If{$|\texttt{open\_set}|<\texttt{max\_open\_set\_size}$}
        \State Insert $(\texttt{new\_criterion},\,\texttt{new\_design})$ into \texttt{open\_set}
      \Else
        \State Let $(\texttt{worst\_crit},\,\texttt{worst\_des})$ be the element with maximum criterion in \texttt{open\_set}
        \If{$\texttt{new\_criterion}<\texttt{worst\_crit}$}
          \State Replace $(\texttt{worst\_crit},\,\texttt{worst\_des})$ by $(\texttt{new\_criterion},\,\texttt{new\_design})$ in \texttt{open\_set}
        \EndIf
      \EndIf
    \EndIf
  \EndFor
  \State Add $\texttt{current\_design}$ to \texttt{closed\_set}
\EndFor
\State \Return $\texttt{best\_design}$
\end{algorithmic}
\end{algorithm}

 \section{Simulations}\label{Sec:Simulation}
To evaluate Algorithm \ref{alg:optimize_design} in finding optimal designs, a series of simulation studies on both synthetic and real data were conducted. The focus of the study was to compare the efficiency of OGFS relative to the following designs:
\begin{itemize}
    \item \text{SRS}: Simple Random Sampling,
    \item \text{CUB}: Cube Method \citep{dev:til:04a},
    \item \text{DSD}: Determinantal Sampling Design \citep{loo:23}.
\end{itemize}
OGFS was optimized using a variance criterion:
\[
C(\theta_z = Z, \bm{p}) = \sum\limits_{t=1}^T (\hat{Z}_{s_t} - Z)^2 p_t,
\]
where $\hat{Z}_{s_t}$ is the NHT estimator for $Z$ based on sample $s_t$. 

The efficiency for estimating $Y$ was measured as
\[
EF_{SRS} = \frac{\operatorname{var}(\hat{Y}_{\text{SRS}})}{\operatorname{var}(\hat{Y}_{\text{OGFS}})}, \quad 
EF_{DSD} = \frac{\operatorname{var}(\hat{Y}_{\text{DSD}})}{\operatorname{var}(\hat{Y}_{\text{OGFS}})}, \quad 
EF_{CUB} = \frac{\operatorname{\widehat{var}}(\hat{Y}_{\text{CUB}})}{\operatorname{var}(\hat{Y}_{\text{OGFS}})},
\]
with analogous expressions used for the efficiency of estimating $Z$. Here, $\hat{Y}_\cdot$ denotes the NHT estimator based on the corresponding method. 
For SRS, DSD, and OGFS, the variance of the NHT estimator can be computed exactly from the known first- and second-order inclusion probabilities, without the need for Monte Carlo estimation. 
The reported values for these methods are therefore exact. 
For the Cube method, however, the variance was estimated via Monte Carlo simulation, as the exact computation was not straightforward.

For this simulation, a specialized Python package was utilized, \texttt{graphical-sampling} \citep{graphical_sampling_2024}, which implements the Graphical Sampling Approach. This package served as the core framework for the
simulations in this section, enabling efficient and accurate implementation of the sampling techniques discussed. 
All the designs in this simulation section, except for SRS, used the same auxiliary variable both for defining unequal probabilities and for balancing or optimizing the sampling design.

Also, in all simulations, the parameters of Algorithm~\ref{Al2} were set to $M \sim \operatorname{DU}(1,3)$ and $\alpha \sim \operatorname{CU}(0.7, 0.9)$, where DU and CU denote discrete uniform and continuous uniform distributions, respectively.

\subsection{Simulated Population}
For the simulated population, data were generated as follows:
\begin{itemize}
    \item Population size: $N = 100$; sample size: $n = 10$.
    \item The main variable $y$ was generated as $y \sim \mathcal{N}(100, 10^2)$.
    \item Auxiliary variables were generated as
    \[
    z = b_1 y + \epsilon_z, \quad x = b_2 y + \epsilon_x, \quad b_1, b_2 \in \mathbb{R}, \quad \epsilon_z \sim \mathcal{N}(0, \sigma_z^2), \quad \epsilon_x \sim \mathcal{N}(0, \sigma_x^2),
    \]
    where $\sigma_z$ and $\sigma_x$ were selected to yield approximate target correlations of $\rho_{z,y}, \rho_{x,y} \in \{0.75, 0.85, 0.95\}$.
    \item For $x$, an constant-valued vector was also considered in one scenario to implement equal-probability sampling when evaluating OGFS.
\end{itemize}

For each combination of $\rho_{z,y}$ and $\rho_{x,y}$, Algorithm~\ref{alg:optimize_design} was run with $\Lambda = 1000$ iterations and $N_{\text{node}}=30$. The results are shown in Figure~\ref{fig:simupop}. The efficiency of OGFS relative to DSD and CUB is shown as bar charts, while its efficiency relative to SRS is indicated as values annotated above the bars. This presentation was chosen because the high efficiency values relative to SRS would have resulted in disproportionately tall bars, making it difficult to visualize the comparative performance of DSD and CUB.

Efficiency was evaluated across different combinations of correlations between the auxiliary variables $x$ and $z$ and the main variable $y$, where $\rho_{x,y} = 0.00$ corresponds to the case of equal probability sampling. Although the main purpose of sampling is the estimation of the main variable parameter, $Y$, in this study the estimation of $Z$—which serves as the auxiliary variable used to optimize the design in OGFS and to impose balance constraints in DSD and CUB—is also computed and plotted. This allows us to evaluate the performance of each design with respect to the variable that provides the information for optimization and balancing. Then, in the analysis, the main focus is on estimating $Y$.
 
 In general, OGFS demonstrated good performance when $\rho_{x,y} = 0$, representing the equal probability sampling case. In these scenarios, the correlation between $z$ and $y$ plays a crucial role, as the efficiency gain in estimating $Y$ depends on how effectively information from $z$ can support the estimation of $Y$. When moving to unequal probability sampling, the situation changes: the variation of the NHT estimator becomes primarily influenced by linear combinations of $z_k / \pi_k$ for $k \in \mathbb{S}$, making the correlation between $z_k / \pi_k$ and $y_k / \pi_k$ particularly important. As demonstrated, Algorithm~\ref{alg:optimize_design} was able to address these challenges effectively, with OGFS consistently outperforming or at least matching the performance of its competitors, DSD and CUB. The efficiency of OGFS relative to SRS was substantial across all scenarios, and notably, DSD outperformed CUB in almost all cases.

Considering the magnitude of efficiency gains, OGFS exhibited superior performance in estimating the parameter of the auxiliary variable used for optimization, $z$ (top plot). Furthermore, in terms of the number of scenarios where OGFS strictly outperformed its competitors, DSD and CUB, its performance was particularly noteworthy for estimating the parameter of the main variable, $y$ (bottom plot).

Overall, these simulation studies confirm the robustness and efficiency of OGFS across all tested settings, involving various combinations of $\rho_{z,y}$ and $\rho_{x,y}$.

\begin{figure}[h]
    \centering
    \includegraphics[width=130mm]{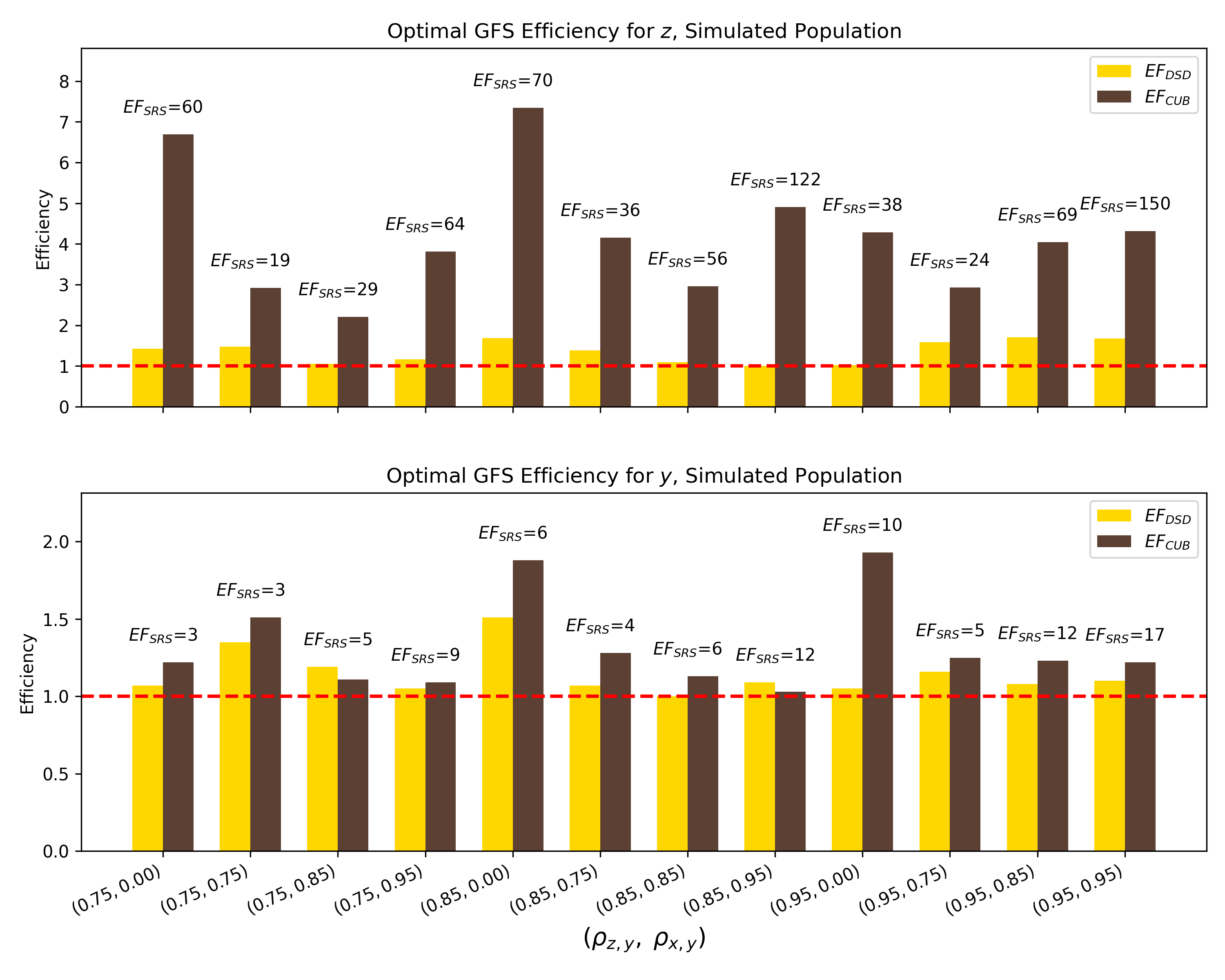}
    \caption{Efficiency of OGFS relative to SRS, DSD, and CUB across different correlation settings in simulated populations. The bars represent efficiency relative to DSD and CUB, while efficiency relative to SRS is annotated as values above the bars to enhance readability, given the large values observed for SRS. The red horizontal line indicates the baseline where efficiency equals 1. Top: efficiency for $z$; Bottom: efficiency for $y$.}
    \label{fig:simupop}
\end{figure}

\subsection{MU284 Data}
Now that Algorithm~\ref{alg:optimize_design} has been evaluated across different simulated scenarios involving correlations between the main and auxiliary variables, this subsection assesses its performance in a real case study, the \texttt{MU284} dataset from \citet{sar:swe:wre:92}, which contains data from 284 Swedish municipalities. To improve the clarity of our visualizations, we removed three outliers. These outliers were not high-leverage points and did not materially affect the results, as correlations remained consistent before and after their exclusion. This assessment considered the following variables:
\begin{itemize}
    \item $y=$ \texttt{RMT85}: revenues from 1985 municipal taxation (in millions of kronor),
    \item $z=$ \texttt{CS82}: number of Conservative seats in the municipal council,
    \item $x=$ \texttt{SS82}: number of Social-Democratic seats in the municipal council.

\end{itemize}
Figure~\ref{fig:mu284} presents the distributions and correlations of the selected variables. The main and auxiliary variables were chosen to ensure that the estimation of $y$ based on $x$ and $z$ was meaningful given the temporal availability of the data. Both $x$ and $z$ exhibited moderate correlations with $y$ ($\rho_{z,y} = 0.46$, $\rho_{x,y} = 0.45$), providing a realistic context to evaluate the effectiveness of Algorithm~\ref{alg:optimize_design}.
\begin{figure}[h]
    \centering
    \includegraphics[width=90mm]{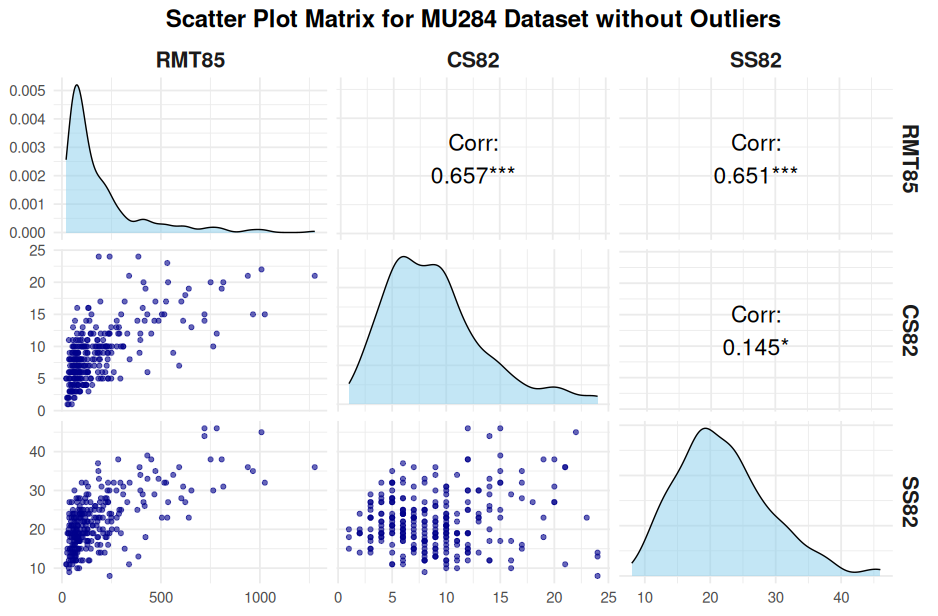}
    \caption{Scatter plot matrix and distributions for the three selected variables from the MU284 dataset after outlier removal. The figure displays Pearson correlation coefficients, with significance levels indicated by symbols: * for $p$-value $< 0.05$ and *** for $p$-value $< 0.001$.}
    \label{fig:mu284}
\end{figure}

In addition to examining the effect of moderate correlation in a realistic setting, the role of $N_{\text{node}}$, an important parameter of Algorithm~\ref{alg:optimize_design}, was investigated. In general, aside from the inherent randomness in the search path, larger values of $N_{\text{node}}$ increase the breadth fo the search per iteration and are expected to lead to greater efficiency of OGFS. To this end, combinations of three sample sizes ($n = 5, 10, 15$) and three levels of $N_{\text{node}}$ ($10$, $30$, and $50$) were considered. The number of iterations was fixed at $\Lambda = 2500$. 

The results are presented in Figure~\ref{fig:realpop}. As before, efficiencies relative to DSD and CUB are shown as bar charts, while efficiencies relative to SRS are annotated as values above the bars.
 The results indicate that, as expected, increasing $N_{\text{node}}$ generally improves the efficiency of OGFS. With sufficiently large values of $N_{\text{node}}$, OGFS outperformed SRS, DSD, and CUB in almost all cases, despite the moderate correlations between $x$ and $y$ and between $z$ and $y$. Here, as in the simulated population, considering the cases in which OGFS strictly outperformed its competitors, the more realistic scenario of estimating the parameter of the main variable $Y$ demonstrated stronger performance compared to the case of estimating the parameter of the auxiliary variable $Z$, particularly when $N_{\text{node}}$ was sufficiently large. However, in terms of the magnitude of the efficiency gains, OGFS exhibited better performance in estimating $z$. This suggests that OGFS is able to leverage information within $z$ more effectively during optimization, thereby improving both the estimation of $z$ and its paired main variable $y$ more efficiently than its rivals, DSD and CUB.

The sample size $n$ showed an generally positive effect on the efficiency of OGFS in estimating $Z$, while for the main variable $y$, this effect was not observed as consistently. Nevertheless, across almost all sample sizes, OGFS outperformed the other designs overall.

An interesting observation is that in many cases, when the algorithm was stopped, the efficiencies of OGFS were still increasing. This suggests that with a higher-performance computing configuration and sufficient compute time, it would be possible to identify even more efficient OGFS designs.

\begin{figure}[h]
    \centering
    \includegraphics[width=130mm]{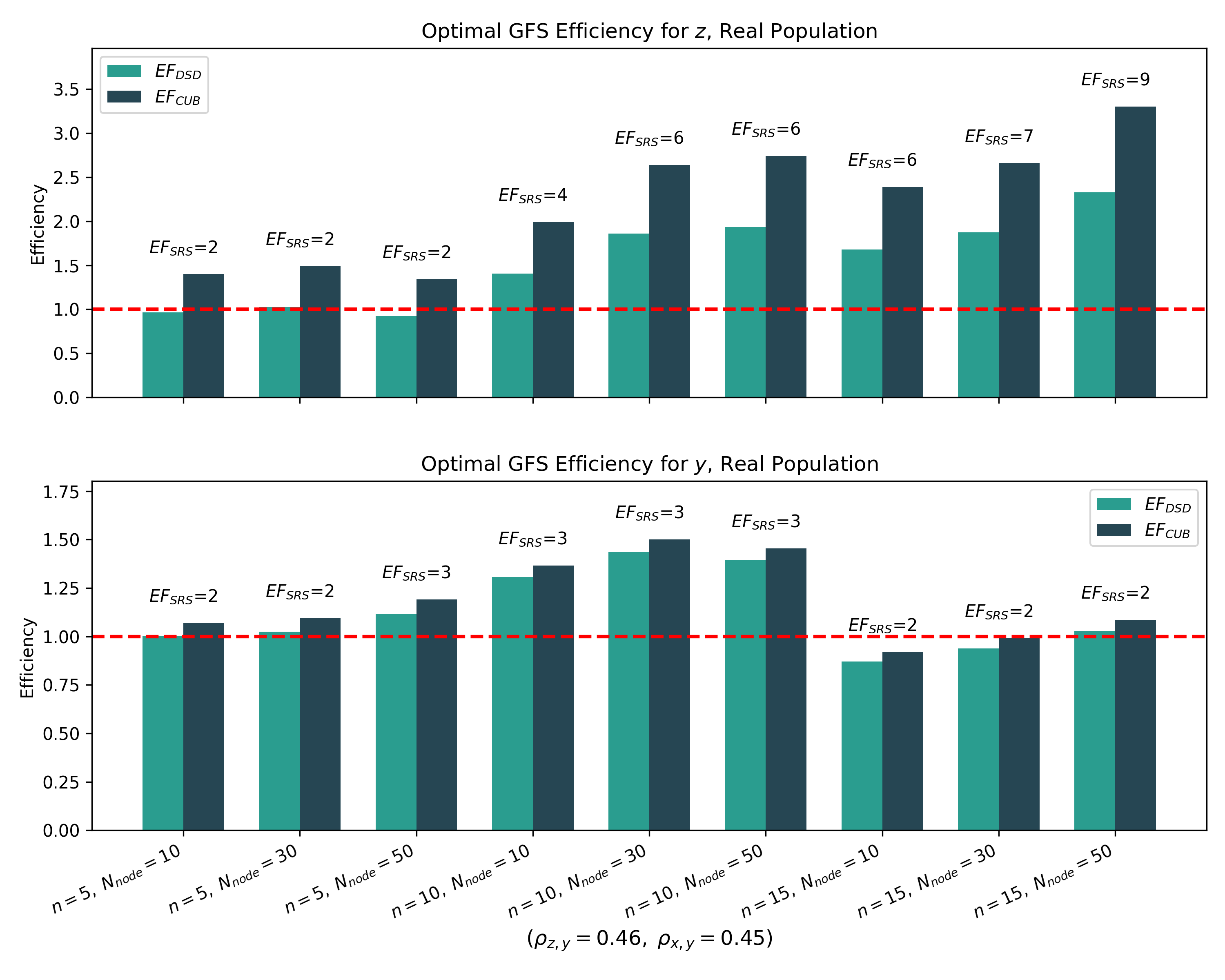}
    \caption{Efficiency of OGFS relative to SRS, DSD, and CUB for the MU284 dataset across different values of $N_{\text{node}}$ and sample sizes. The bars represent efficiency relative to DSD and CUB, while efficiency relative to SRS is annotated as values above the bars to enhance readability, given the large values observed for SRS. The red horizontal line indicates the baseline where efficiency equals 1. Top: efficiency for $z$; Bottom: efficiency for $y$.}
    \label{fig:realpop}
\end{figure}

While there are many ways to further improve Algorithm~\ref{alg:optimize_design}, my aim here is not to provide an exhaustive treatment. Some potential enhancements include:
\begin{itemize}
    \item \textbf{Multi-start strategies:} initiating the search from multiple diverse initial configurations to reduce sensitivity to starting conditions and improve exploration of the design space.
    \item \textbf{Adaptive open set pruning:} dynamically pruning the open set to focus on the most promising candidate designs while maintaining diversity.
    \item \textbf{Shocks/random injection:} introducing new initial designs mid-optimization if the process appears to be trapped in a local optimum, helping to escape suboptimal regions.
    \item \textbf{Dynamic adjustment of $N_{\text{node}}$:} varying the number of candidate designs generated at each iteration based on convergence behavior or search progress.
    \item \textbf{Hybrid local-global search:} combining greedy search steps with stochastic exploration methods (e.g., random swaps or perturbations) to enhance search robustness.
\end{itemize}
A comprehensive investigation of these strategies is left for future work. The focus of the present study is to illustrate that applications of innovative GFS methods, and their integration with intelligent algorithms, already demonstrate diverse advantages and notable efficiencies.

Also, it is crucial to highlight that the process of selecting the optimal design differs from simply testing multiple samples and choosing the best among them. In this process, numerous designs are constructed based on a predetermined FIP, the best desing is selected, and this optimal design is then used to select a sample. This preserves the unbiasedness of the NHT estimator under the chosen design. Conversely, assessing multiple samples and selecting the most accurate estimation, possibly utilizing auxiliary variables, can lead to biased estimates.

\section{Summary and Suggestions}\label{Sec.Clu}
A graphical sampling framework, GFS, has been introduced in this study. GFS represents a novel gateway to finite population sampling, offering a fresh perspective that reshapes conventional thinking in sampling methodology. This innovative approach fosters creativity in developing new and efficient designs by enabling researchers to explore diverse sampling strategies. Through simple segment rearrangements, GFS can generate designs ranging from the baseline systematic construction of \citet{mad:49}—which often induces many zero SIP entries—to alternatives with broadly distributed design probabilities and a markedly denser SIP structure. Such breadth of exploration was previously unattainable using traditional mathematical algorithms.

Two notable components of this framework, Chaotic GFS and OGFS, illustrate how GFS can either generate new designs or identify optimal configurations based on specific criteria. Additionally, the inherent flexibility of GFS naturally invites integration with a wide variety of intelligent algorithms. Beyond the greedy best-first search algorithm employed in this study, methods such as genetic algorithms—where segments could be modeled as genes and configurations as chromosomes—as well as A*, simulated annealing, or particle swarm optimization, could be adapted to further enhance efficiency and precision.

Despite these strengths, a notable challenge emerges with the GFS approach as population size ($N$) grows large. Specifically, as the total number of possible samples ($T = 2^N$) increases exponentially, managing and preserving the full set of designs becomes computationally demanding. Nevertheless, given recent advancements in high-performance computing and the ongoing development of quantum computing technologies, these limitations are becoming increasingly manageable for small- to medium-sized populations. For larger populations and high-dimensional scenarios, further research into more sophisticated intelligent algorithms or compressed representation techniques could substantially enhance the scalability and practical applicability of GFS.

Looking ahead, a natural direction is to formalize how other standard designs (simple random, stratified, systematic, multi-stage) arise within this bar-based construction by specifying the appropriate constraints and update rules. A second priority is to investigate the conjectures formulated earlier on the limiting behavior of the chaotic updates (with and without a fixed-size constraint)—clarifying assumptions, establishing rates of concentration, and assessing robustness to implementation choices.

Given its flexibility in segment arrangement, the framework provides a unified mechanism to explore and optimize a wide array of design criteria—such as anticipated variance/efficiency, spatial spread, and distributional coverage—within a single construction. In this sense, the method offers promising avenues for generating diverse, efficient, and practically effective sampling designs.

\section*{Acknowledgments}
The author expresses sincere gratitude to three of his exceptionally talented students—Mehdi H. Moghadam, Mehdi Mohebbi, and Amir HosseiniNasab—for their invaluable contributions and insightful comments, particularly during the programming stage of this research. The author is also deeply grateful to Vincent Loonis\footnote{French National Institute of Statistics and Economic Studies, (INSEE)} for his generous guidance and valuable discussions, which significantly contributed to the simulation study. Additionally, the author warmly thanks Prof.~Yves Tillé for insightful discussions during the author's visit to the University of Neuchâtel, which helped significantly in refining the ideas presented in this paper.

\end{document}